\newenvironment{proof}[1][Proof]{\noindent\textit{#1.} }{\hfill  \rule{0.5em}{0.5em}}
\newtheorem{theorem}{Theorem}
\newtheorem{lm}{Lemma}
\newtheorem{exmp}{Example}
\begin{document}

\title{
A note on 
Fredholm integral equation
}

\author{
Chaitanya Gopalakrishna
\vspace{3mm}\\
{\small Statistics and Mathematics Unit, Indian Statistical Institute,}
\\
{\small  R.V. College Post, Bangalore-560059, India}
    \vspace{0.2cm}\\
    {\small cberbalaje@gmail.com 
    }}

\date{}


\maketitle

\begin{abstract}
This note gives results on the existence of semi-continuous solutions of a Fredholm integral equation of the second kind using Tarski's fixed point theorem.

\vskip 0.2cm

{\bf Keywords:}
Fredholm integral equation; complete lattice; order-preserving map; Tarski's fixed point theorem.
\vskip 0.2cm

{\bf MSC(2010):}
primary 45B05; 47H10; secondary  06B23.
\end{abstract}



\baselineskip 16pt
\parskip 10pt


\section{Introduction}

An {\it integral equation} is equality where an unknown function appears under an integral sign. Such equations, which  frequently arise in many scientific and engineering problems,
 have been intensively investigated, and many different powerful methods for obtaining their exact and approximate solutions have been proposed (see   \cite{Agarwal,Atkinson,Colton,Gripenberg,Hsiao,pruss,Wazwaz} for example). In particular, special attention is given to
the study of a Fredholm integral equation of the second kind
\begin{eqnarray}\label{Fredholm}
\phi(t)=f(t)+\lambda\int_{a}^{b}K(t,s)\phi(s)ds,\quad t\in [a,b],
\end{eqnarray} 
 one of the most fundamental integral equations, for deeper analysis.
 A small sample of the literature on  advances in various aspects of it and its variants 
  can also be seen, for example, in \cite{Atkinson1976,Chen,Jerri,Kress,Zemyan}.
 

In this note, we investigate 
 \eqref{Fredholm} 
 for 
  semi-continuous solutions.
 Unlike all existing methods, our strategy here is to make use of Tarski's fixed point theorem. After discussing the  preliminaries, 
 we give sufficient conditions for the existence of monotonic semi-continuous solutions for \eqref{Fredholm}
 using this theorem. 
 Our study not only provides some novel results on the solutions of \eqref{Fredholm}, but also applies a fixed point theorem for lattices, demonstrating a subtle interplay between lattice theory and integral equation theory.
 


In the remainder of the Introduction, we record certain notions from lattice theory that will be used to prove our results in the next section.
As defined in \cite{Szasz}, a relation $\preceq$ on a nonempty set $L$ is
called a {\it partial order} if it is
reflexive (i.e., $x\preceq x$ for all $x\in L$),
antisymmetric (i.e., $x=y$ whenever $x\preceq y$ and $y\preceq x$ in $L$),
and
transitive (i.e., $x\preceq z$ whenever  $x\preceq y$ and $y\preceq z$ in $L$).
$L$ equipped with a partial order $\preceq$
is called a {\it partially ordered set} (or simply a {\it poset}).
For a subset $E$ of the poset $L$,
$b\in L$ is called an {\it upper bound} (resp. a {\it lower bound}) of $E$
if $x\preceq b$ (resp. $b\preceq x$) for all $x\in E$.
Further, $b$ is called the
{\it least upper bound} or {\it supremum} (resp. {\it greatest lower bound} or {\it infimum}),
denoted by $\sup_L E$ (resp. $\inf_L E$),
if $b$ is an  upper bound (resp.  lower bound) of $E$ and
every upper bound (resp. lower bound) $z$ of $E$ satisfies
$b\preceq z$ (resp. $z\preceq b$).
A poset $L$ is said to be {\bf (i)} {\it simply ordered} (or a {\it chain}) if at least one of the relations $x\preceq y$ and $y\preceq x$ hold whenever $x,y \in L$; {\bf (ii)} a {\it lattice} if $\sup_L\{x,y\}$, $\inf_L\{x,y\}\in L$ for every $x,y \in L$; {\bf (iii)} a {\it complete lattice} if $\sup_L E, \inf_L E \in L$ for every nonempty subset $E$ of $L$.
A nonempty
subset $E$ of a lattice $L$ is said to be {\bf (i)} a {\it sublattice} of $L$ if 
$\sup_L\{x,y\}$, $\inf_L\{x,y\}\in E$ for every $x,y \in E$; 
{\bf (ii)} a {\it complete sublattice} of $L$ if 
$\sup_L Y$ and $\inf_L Y$ exist, and both are in $E$ for every nonempty subset $Y$ of $E$.
For convenience, we use  $(L \preceq)$ to denote a lattice $L$ in the partial order $\preceq$.

A map $\phi:L\to L'$, where $(L, \preceq)$ and $(L', \preceq')$ are lattices, is said to be {\it order-preserving} (resp. {\it order-reversing}) if $\phi(x)\preceq' \phi(y)$ (resp. $\phi(y)\preceq' \phi(x)$) whenever  $x\preceq y$ in $L$. A map $K:L\times L'\to L''$ of two variables $x$ and $y$, where $(L'', \preceq'')$ is also a lattice, is said to be {\bf (i)} {\it order-preserving in the variable $x$} (resp. $y$) if $L\ni x\mapsto K(x,y)\in L''$ (resp. $L'\ni y\mapsto K(x,y)\in L''$) is order-preserving for all $y\in L'$ (resp. $x\in L$); {\bf (ii)} {\it order-reversing in the variable $x$} (resp. $y$) if $L\ni x\mapsto K(x,y)\in L''$ (resp. $L'\ni y\mapsto K(x,y)\in L''$) is order-reversing for all $y\in L'$ (resp. $x\in L$).
Let $\mathcal{F}(L,L')$  and $\mathcal{F}_{op}(L,L')$ (resp. $\mathcal{F}_{or}(L,L')$) denote the poset of all maps  and order-preserving (resp. order-reversing)  maps of $L$ into $L'$
respectively in the {\it pointwise partial order} $\trianglelefteq$ defined by $\phi\trianglelefteq \psi$ if $\phi(x)\preceq' \psi(x)$ for all $x\in L$.
Then $\mathcal{F}(L,L')$, $\mathcal{F}_{op}(L,L')$ and $\mathcal{F}_{or}(L,L')$  are 
lattices in the partial order $\trianglelefteq$.

%
%
%
%
%

\begin{lm}{\rm (Tarski \cite{Tarski})}\label{L0}
	If $(L, \preceq)$ is a complete lattice and $f\in \mathcal{F}_{op}(L,L)$, then the set of all fixed points of  $f$ is a non-empty complete sublattice of $L$.
	Further,
	$f$ has the minimum fixed point $x_*$ and the maximum fixed point $x^*$ in $L$ given by
	$x_*=\inf\{x\in L: f(x)\preceq x\}$ and
	$x^*=\sup\{x\in L: x\preceq f(x)\}$.
\end{lm}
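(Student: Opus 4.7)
My plan is to first establish the explicit formulas for the extremal fixed points $x^*$ and $x_*$, and then use them on appropriately restricted intervals to obtain the completeness of the fixed-point set. For the maximum fixed point, I would consider $A = \{x \in L : x \preceq f(x)\}$, which is nonempty since it contains $\inf_L L$, and set $x^* = \sup_L A$. For any $x \in A$, the inequality $x \preceq x^*$ combined with order-preservation yields $x \preceq f(x) \preceq f(x^*)$, so $f(x^*)$ is an upper bound of $A$ and hence $x^* \preceq f(x^*)$. Applying $f$ once more gives $f(x^*) \in A$, so $f(x^*) \preceq x^*$; antisymmetry then yields $f(x^*) = x^*$. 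Any other fixed point $p$ satisfies $p \preceq f(p)$, so $p \in A$ and $p \preceq x^*$, making $x^*$ the greatest fixed point. The formula $x_* = \inf_L\{x \in L : f(x) \preceq x\}$ for the minimum fixed point is derived by the dual argument, exchanging the roles of $\sup$/$\inf$ and upper/lower bounds.

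For the complete-sublattice assertion, given a nonempty set $E$ of fixed points, let $s = \sup_L E$. For every $y \in E$, $y = f(y) \preceq f(s)$, so $f(s)$ is an upper bound of $E$, forcing $s \preceq f(s)$. Setting $\top = \sup_L L$, the interval $[s, \top] := \{x \in L : s \preceq x \preceq \top\}$ is itself a complete lattice, since suprema and infima computed in $L$ of nonempty subsets of $[s,\top]$ remain within $[s,\top]$. Moreover, $f$ stabilises this interval: for $x \in [s, \top]$ one has $s \preceq f(s) \preceq f(x) \preceq f(\top) \preceq \top$. Applying the minimum-fixed-point part already proved to $f|_{[s, \top]}$ produces the smallest fixed point of $f$ lying above $s$, which is precisely the supremum of $E$ in the fixed-point set. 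The infimum is constructed symmetrically, working in the interval between $\inf_L L$ and $\inf_L E$.

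The main technical hurdle is this last bootstrapping step. One must recognise that the correct ambient lattice for reapplying the extremal formula is not $L$ itself but a carefully chosen order-interval, verify that this interval inherits completeness and is stabilised by $f$ (which crucially depends on the preliminary inequality $s \preceq f(s)$), and finally confirm that the fixed point produced inside the restricted lattice is indeed the least upper bound of $E$ within the entire fixed-point set rather than merely some fixed point above $E$. The preceding Steps~1 and~2 are more mechanical, relying only on a standard two-inequality argument closed off by antisymmetry.
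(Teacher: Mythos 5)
The paper offers no proof of this lemma --- it simply cites Tarski's article and points to \cite{subra2000} and \cite{Gratzer1978} for parts of it --- so there is nothing internal to compare against; your argument is exactly the classical proof from Tarski's paper. Both halves are correct: the two-inequality-plus-antisymmetry derivation that $x^\ast=\sup\{x\in L: x\preceq f(x)\}$ is the greatest fixed point (with the dual for $x_\ast$), and the bootstrapping step in which, for a nonempty set $E$ of fixed points with $s=\sup_L E$, one checks $s\preceq f(s)$, observes that $f$ stabilises the complete lattice $[s,\top]$, and applies the least-fixed-point formula there to obtain the supremum of $E$ \emph{within the fixed-point set}. Your identification of this bootstrapping as the one non-mechanical step, and your verification that the resulting fixed point is the least upper bound of $E$ among fixed points (any fixed point above every element of $E$ lies in $[s,\top]$), are both on target.

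One caveat is worth recording, though it concerns the statement rather than your proof. What your argument establishes --- and what Tarski actually proved --- is that the fixed-point set is a complete lattice in the induced order: the join of $E$ taken inside the fixed-point set is the least fixed point above $\sup_L E$, which can lie strictly above $\sup_L E$. Under the paper's own definition of \emph{complete sublattice}, which requires $\sup_L Y$ itself to belong to $E$, the lemma as written is too strong: a five-element lattice $0\prec a,b\prec c\prec 1$ with $f$ fixing $0,a,b,1$ and sending $c$ to $1$ is order-preserving, yet $\sup_L\{a,b\}=c$ is not fixed. This imprecision does not affect the existence of the minimum and maximum fixed points, which is all the paper uses downstream, but it does mean your (correct) proof cannot, and should not, deliver the literal ``complete sublattice'' claim.
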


The first part of this lemma can also be found in the expository article \cite{subra2000}.
A part of the second, proving that $\sup\{x\in L: x\preceq f(x)\}$ and $\inf\{x\in L: f(x)\preceq x\}$ are fixed points of $f$ thereby proving the existence of a fixed point, can also be found in the book \cite{Gratzer1978}.


\section{Main results}

Let 
$I$ denotes the compact interval $[a,b]$ in $\mathbb{R}$, which is also a simply ordered complete 
lattice in the usual order $\le$.
 As in \cite{royden1988}, a map $\phi:I \to \mathbb{R}$ is said to be {\it USC} (abbreviation for {\it upper-semi-continuous}) (resp. LSC (abbreviation for {\it lower-semi-continuous}))
at $t_0 \in I$ 
if
$\limsup_{t \to t_0} \phi(t) \le \phi(t_0)$ (resp. $\phi(t_0)\le \liminf_{t\to t_0}\phi(t)$), i.e., for every $\epsilon>0$ there exists a $\delta>0$ such that $\phi(t)\le \phi(t_0)+\epsilon$ (resp. $\phi(t_0)-\epsilon \le \phi(t)$) whenever $t\in (t_0-\delta, t_0+\delta)\cap I$.  $\phi$ is said to be USC (resp. LSC) on $I$ if it is USC (resp. LSC) at  each point of $I$. Let
$\mathcal{F}^{usc}_{op}(I,\mathbb{R}_+)$ (resp. $\mathcal{F}^{lsc}_{op}(I,\mathbb{R}_+)$) denotes the set of all USC (resp. LSC) maps in $\mathcal{F}_{op}(I,\mathbb{R}_+)$, where $\mathbb{R}_+:=[0,\infty)$, the set of non-negative reals, which is also a lattice in the usual order $\le$.

\begin{lm}\label{L1}
	$\mathcal{F}^{usc}_{op}(I,\mathbb{R}_+;\kappa):=\{\phi \in \mathcal{F}^{usc}_{op}(I,\mathbb{R}_+):\phi(t)\le \kappa~\text{for all}~t\in I\}$ is a complete lattice in the partial order $\trianglelefteq$ for each $\kappa\ge 0$. This is also true when upper-semi-continuity is replaced by lower-semi-continuity.
\end{lm}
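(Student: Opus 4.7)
The plan is to show directly that every non-empty $\Phi \subseteq \mathcal{F}^{usc}_{op}(I,\mathbb{R}_+;\kappa)$ has both a $\trianglelefteq$-infimum and a $\trianglelefteq$-supremum in the set. The infimum will be a pointwise construction; the supremum requires one extra step because upper semi-continuity is not preserved under pointwise suprema.

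For the infimum, I would set $\chi(t) := \inf_{\phi \in \Phi} \phi(t)$. Monotonicity (each $\phi$ is non-decreasing), non-negativity, and the bound $\chi \le \kappa$ are immediate. Upper semi-continuity follows from the open-set characterization of USC maps (equivalent to the $\limsup$ condition in the paper): for every $c \in \mathbb{R}$,
\[
\{t \in I : \chi(t) < c\} \;=\; \bigcup_{\phi \in \Phi} \{t \in I : \phi(t) < c\},
\]
a union of relatively open subsets of $I$. Hence $\chi \in \mathcal{F}^{usc}_{op}(I,\mathbb{R}_+;\kappa)$, and by its definition $\chi$ is the $\trianglelefteq$-greatest lower bound of $\Phi$.

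For the supremum, the naive pointwise supremum $\psi_0(t) := \sup_{\phi \in \Phi} \phi(t)$ may fail to be USC — for example on $I = [0,1]$, the USC monotonic step functions $\phi_n := (1 - \tfrac{1}{n})\mathbf{1}_{[1/n, 1]}$ have pointwise supremum equal to $\mathbf{1}_{(0,1]}$, which is not USC at $0$. To produce the correct supremum I would bootstrap off the infimum argument. Let $U := \{\eta \in \mathcal{F}^{usc}_{op}(I,\mathbb{R}_+;\kappa) : \phi \trianglelefteq \eta \text{ for every } \phi \in \Phi\}$. This is non-empty, since the constant function $\kappa$ lies in it. Define $\psi(t) := \inf_{\eta \in U} \eta(t)$. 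By the same argument as for $\chi$ applied to the family $U$, one obtains $\psi \in \mathcal{F}^{usc}_{op}(I,\mathbb{R}_+;\kappa)$, and since every $\eta \in U$ dominates every $\phi \in \Phi$, so does $\psi$; thus $\psi \in U$. By its definition $\psi$ is the pointwise least element of $U$, hence the $\trianglelefteq$-least upper bound of $\Phi$.

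The lower-semi-continuous version is proved by the symmetric argument: the pointwise supremum of LSC functions is LSC (superlevel sets become open unions), so it is itself the $\trianglelefteq$-supremum, while the infimum is obtained as the pointwise supremum of the non-empty collection of lower bounds (which contains the constant $0$). The one conceptual hurdle throughout is the one-sided failure of semi-continuity under pointwise sup/inf, and the ``inf over all upper bounds'' (respectively ``sup over all lower bounds'') device above is the clean way to bypass it.
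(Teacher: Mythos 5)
Your proof is correct and follows essentially the same route as the paper: the paper also takes the pointwise infimum of an arbitrary subfamily (using the constant $\kappa$ for the empty family) and then invokes Lemma 14 of Gr\"atzer's book, whose content is exactly your ``infimum of the set of upper bounds'' device for producing suprema. You have simply made explicit two steps the paper leaves implicit --- the verification via sublevel sets that a pointwise infimum of USC order-preserving maps is again USC and order-preserving, and the internal mechanism of the cited lattice-theoretic lemma --- both of which are carried out correctly.
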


\begin{proof}
Consider an arbitrary subset $\mathcal{E}$ of $\mathcal{F}^{usc}_{op}(I, \mathbb{R}_+;\kappa)$. If $\mathcal{E}=\emptyset$ (resp. $\mathcal{E}\ne\emptyset$), then the map $\Phi:I\to \mathbb{R}_+$ defined by $\Phi(t)=\kappa$ (resp. $\Phi(t)=\inf\{f(t): f \in \mathcal{E}\}$)
	is the infimum of $\mathcal{E}$ in $\mathcal{F}^{usc}_{op}(I,\mathbb{R}_+;\kappa)$.  
	Therefore by Lemma $14$ of \cite{Gratzer1978},
	which says that if every subset of a poset $P$ has the infimum in $P$ then $P$ is complete,
	we see that $\mathcal{F}^{usc}_{op}(I,\mathbb{R}_+;\kappa)$ is a complete lattice. This proves the first part. The proof for the second part is similar. 
\end{proof}

Having the above lemma, we are ready to give our intended results on  
semi-continuous  solutions of \eqref{Fredholm}. 

\begin{theorem}\label{Thm1}
	Let $\lambda,\kappa, \mu, \rho \ge 0$, $f\in \mathcal{F}^{usc}_{op}(I,\mathbb{R}_+;\mu)$, and 
	 $K$ be a continuous map of $I^2:=I\times I$ into $\mathbb{R}_+$
	 order-preserving in both the variables $t$ and $s$ such that  $K(t,s)\le \rho$ for all $(t,s)\in I^2$.
	  Then
	the set $\mathcal{S}_{op}^{usc}(I,\mathbb{R}_+;\kappa)$ of all solutions of \eqref{Fredholm}
	 in $\mathcal{F}_{op}^{usc}(I, \mathbb{R}_+;\kappa)$
	is a non-empty complete sublattice of $\mathcal{F}_{op}^{usc}(I, \mathbb{R}_+;\kappa)$ provided $\kappa(1-\lambda \rho)\ge \mu$.
	Further,
	\eqref{Fredholm} has the minimum solution $\phi_*$ and the maximum solution $\phi^*$ in $\mathcal{F}_{op}^{usc}(I,\mathbb{R}_+;\kappa)$  given by
	\begin{eqnarray*}
		\phi_*&=&\inf\left\{\phi\in \mathcal{F}_{op}^{usc}(I,\mathbb{R}_+;\kappa):   f(t)+\lambda\int_{a}^{b}K(t,s)\phi(s)ds \le \phi(t) ~\text{for all}~t\in I\right\},\\
		\phi^*&=&\sup\left\{\phi\in \mathcal{F}_{op}^{usc}(I,\mathbb{R}_+;\kappa): \phi(t) \le  f(t)+\lambda\int_{a}^{b}K(t,s)\phi(s)ds ~\text{for all}~t\in I\right\}.
	\end{eqnarray*}
\end{theorem}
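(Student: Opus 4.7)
The strategy is to recast \eqref{Fredholm} as a fixed-point equation on the complete lattice $\mathcal{F}_{op}^{usc}(I,\mathbb{R}_+;\kappa)$ (available via Lemma \ref{L1}) and apply Tarski's theorem (Lemma \ref{L0}). Define the operator $T:\mathcal{F}_{op}^{usc}(I,\mathbb{R}_+;\kappa)\to \mathcal{F}_{op}^{usc}(I,\mathbb{R}_+;\kappa)$ by
$$(T\phi)(t)=f(t)+\lambda\int_a^b K(t,s)\phi(s)\,ds,$$
so that a function in $\mathcal{F}_{op}^{usc}(I,\mathbb{R}_+;\kappa)$ solves \eqref{Fredholm} exactly when it is a fixed point of $T$. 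Once I verify that $T$ is a well-defined order-preserving self-map with respect to $\trianglelefteq$, Lemma \ref{L0} immediately yields that the fixed-point set (which is $\mathcal{S}^{usc}_{op}(I,\mathbb{R}_+;\kappa)$) is a non-empty complete sublattice and supplies the displayed formulas for $\phi_*$ and $\phi^*$.

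The substantive work lies in showing $T\phi\in \mathcal{F}_{op}^{usc}(I,\mathbb{R}_+;\kappa)$ for each $\phi$ in that set. Non-negativity of $T\phi$ is clear from $f,\lambda,K,\phi\ge 0$. Monotonicity of $T\phi$ on $I$ follows from monotonicity of $f$ and of $K(\cdot,s)$ for every fixed $s$, together with $\phi\ge 0$, so that $t_1\le t_2$ gives $f(t_1)\le f(t_2)$ and $K(t_1,s)\phi(s)\le K(t_2,s)\phi(s)$ pointwise in $s$. The uniform bound $(T\phi)(t)\le \kappa$ follows from $f\le \mu$, $K\le \rho$, and $\phi\le \kappa$, which yield $(T\phi)(t)\le \mu+\lambda\rho\kappa$; the hypothesis $\kappa(1-\lambda\rho)\ge \mu$ then closes the estimate.

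I expect the main subtlety to be the upper semi-continuity of $T\phi$. Since $f$ is USC, it suffices to prove that $t\mapsto \int_a^b K(t,s)\phi(s)\,ds$ is in fact \emph{continuous}. Since $\phi$ is USC it is Borel measurable, and it is bounded by $\kappa$; since $K$ is continuous on the compact square $I^2$, it is uniformly continuous, so $K(t_n,s)\phi(s)\to K(t_0,s)\phi(s)$ uniformly in $s$ whenever $t_n\to t_0$ in $I$, and the dominated convergence theorem (with dominant $\rho\kappa$) gives continuity of the integral. A sum of a USC function and a continuous one is USC, hence $T\phi$ is USC on $I$. The hypothesis that $K$ is \emph{continuous} (rather than merely measurable or monotone) is used here in an essential way.

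Finally, order-preservation of $T$ with respect to $\trianglelefteq$ is immediate from $K\ge 0$: if $\phi_1\trianglelefteq\phi_2$, then $K(t,s)\phi_1(s)\le K(t,s)\phi_2(s)$ for all $s$, so integration gives $T\phi_1\trianglelefteq T\phi_2$. Applying Lemma \ref{L0} to the order-preserving self-map $T$ on the complete lattice $\mathcal{F}_{op}^{usc}(I,\mathbb{R}_+;\kappa)$ completes the argument, with the formulas for $\phi_*$ and $\phi^*$ being the direct translation of those in Lemma \ref{L0}, since $T\phi\trianglelefteq \phi$ and $\phi\trianglelefteq T\phi$ unwind to the pointwise inequalities in the two sup/inf sets.
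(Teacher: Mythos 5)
Your proposal is correct and follows essentially the same route as the paper: recast \eqref{Fredholm} as a fixed-point problem for the operator $T$ on the complete lattice $\mathcal{F}_{op}^{usc}(I,\mathbb{R}_+;\kappa)$ (Lemma \ref{L1}) and invoke Tarski's theorem (Lemma \ref{L0}). The only minor difference is in the upper semi-continuity of $T\phi$: you observe that the integral term is genuinely continuous via uniform continuity of $K$, whereas the paper gives a direct one-sided $\epsilon$--$\delta$ estimate (which forces it to treat the cases $\lambda=0$, $\rho=0$, $\mu=0$ separately to avoid dividing by zero); your variant is slightly cleaner but not a different method.
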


\begin{proof}
If $\lambda=0$ or $\rho=0$ (resp. $\mu=0$), then  $\phi=f$ is the unique  (resp. $\phi=0$ is trivially a) solution of \eqref{Fredholm} in  $\mathcal{F}_{op}^{usc}(I,\mathbb{R}_+;\kappa)$. So, we may assume that $\lambda, \mu, \rho> 0$. Then $\kappa>0$. 	Define $T$ on $\mathcal{F}_{op}^{usc}(I,\mathbb{R}_+;\kappa)$ by
	\begin{eqnarray}\label{T}
	T\phi(t) = f(t)+\lambda\int_{a}^{b}K(t,s)\phi(s)ds, \quad \forall t\in I.
	\end{eqnarray}

\noindent {\bf Claim:} $T$ is an order-preserving self-map on $\mathcal{F}_{op}^{usc}(I, \mathbb{R}_+;\kappa)$.

	Since $K$ is  order-preserving in the variable $s$, clearly  $s\mapsto K(t,s)\phi(s)$ is an order-preserving map of $I$ into $\mathbb{R}_+$ for each $t\in I$ and $\phi \in \mathcal{F}_{op}^{usc}(I,\mathbb{R}_+;\kappa)$. Therefore $T$ is a well-defined map.

Consider an arbitrary $\phi \in \mathcal{F}_{op}^{usc}(I, \mathbb{R}_+;\kappa)$.
Since $\lambda,\kappa, \mu, \rho > 0$ and $\mu \le \kappa(1-\lambda \rho)$, clearly  $0\le T\phi(t)\le \mu+\lambda\kappa\rho\le \kappa$ for all $t\in I$.  Also, since $f$ is order-preserving on $I$ and $K$ is order-preserving in the variable $t$, it follows that $T\phi$ is order-preserving on $I$.

Next, to prove that $T\phi$ is USC, consider arbitrary $t_0\in I$ and $\epsilon>0$. 
Since $K$ is uniformly continuous on $I^2$, there exists $\delta_1>0$ such that $K(t,s)\le K(t_0,s)+\frac{\epsilon}{2\lambda\kappa(b-a)}$ for all $s\in I$ and $t\in (t_0-\delta_1, t_0+\delta_1)\cap I$.
Since $f$ is USC at $t_0$, there exists $\delta_2>0$ such that $f(t)\le f(t_0)+\frac{\epsilon}{2}$ for all $t\in (t_0-\delta_2, t_0+\delta_2)\cap I$.
Then
\begin{eqnarray*}
T\phi(t)&\le & \left(f(t_0)+\frac{\epsilon}{2}\right)+\lambda \int_{a}^{b}\left(K(t_0,s)+\frac{\epsilon}{2\lambda\kappa(b-a)}\right)\phi(s)ds\\
&\le&f(t_0)+\lambda \int_{a}^{b}K(t_0,s)\phi(s)ds+\epsilon\\
&=&T\phi(t_0)+\epsilon
\end{eqnarray*}
for all $t\in (t_0-\delta, t_0+\delta)\cap I$, where $\delta:=\min\{\delta_1,\delta_2\}$, implying that $T\phi$ is USC at $t_0$.
Further, it is easy to see that $T\phi(t)\le T\psi(t)$ for all $t\in I$ whenever $\phi \trianglelefteq \psi$ in $\mathcal{F}_{op}^{usc}(I, \mathbb{R}_+;\kappa)$. 


Thus, the claim holds, which implies by Lemmas \ref{L0} and the first part of Lemma \ref{L1} that the set of all fixed points of $T$ and hence the set of all solutions of \eqref{Fredholm} in $\mathcal{F}_{op}^{usc}(I, \mathbb{R}_+;\kappa)$ is a non-empty complete sublattice of $\mathcal{F}_{op}^{usc}(I, \mathbb{R}_+;\kappa)$. 
In particular, \eqref{Fredholm} has the minimum solution $\phi_*$ and the maximum solution $\phi^*$ in $\mathcal{F}_{op}^{usc}(I, \mathbb{R}_+;\kappa)$, which are in fact $\min \mathcal{S}^{usc}_{op}(I, \mathbb{R}_+;\kappa)$ and $\max \mathcal{S}^{usc}_{op}(I,\mathbb{R}_+;\kappa)$, respectively.
Further, by Lemma \ref{L0}, we have $\phi_*=\inf\{\phi\in \mathcal{F}_{op}^{usc}(I, \mathbb{R}_+;\kappa):  T\phi \trianglelefteq  \phi\}$ and $\phi^*=\sup\{\phi\in \mathcal{F}_{op}^{usc}(I, \mathbb{R}_+;\kappa): \phi\trianglelefteq  T\phi\}$.
This completes the proof.
\end{proof}

Remark that the current approach with the map $T$ defined in \eqref{T}, employed in the above theorem, can also be used to give a result on order-reversing USC solutions for \eqref{Fredholm}.
More concretely, we have the following result, whose proof is similar to that of the above theorem. 

\begin{theorem}\label{Thm2}
	Let $\lambda,\kappa, \mu, \rho \ge 0$, $f\in \mathcal{F}^{usc}_{or}(I,\mathbb{R}_+;\mu)$, and 
$K$ be a continuous map of $I^2$ into $\mathbb{R}_+$
order-reversing in both the variables $t$ and $s$ such that  $K(t,s)\le \rho$ for all $(t,s)\in I^2$.
Then
the set $\mathcal{S}_{or}^{usc}(I,\mathbb{R}_+;\kappa)$ of all solutions of \eqref{Fredholm}
in $\mathcal{F}_{or}^{usc}(I, \mathbb{R}_+;\kappa)$
is a non-empty complete sublattice of $\mathcal{F}_{or}^{usc}(I, \mathbb{R}_+;\kappa)$ provided $\kappa(1-\lambda \rho)\ge \mu$.
Further,
\eqref{Fredholm} has the minimum solution $\phi_*$ and the maximum solution $\phi^*$ in $\mathcal{F}_{or}^{usc}(I,\mathbb{R}_+;\kappa)$  given by
\begin{eqnarray*}
	\phi_*&=&\inf\left\{\phi\in \mathcal{F}_{or}^{usc}(I,\mathbb{R}_+;\kappa):   f(t)+\lambda\int_{a}^{b}K(t,s)\phi(s)ds \le \phi(t) ~\text{for all}~t\in I\right\},\\
	\phi^*&=&\sup\left\{\phi\in \mathcal{F}_{or}^{usc}(I,\mathbb{R}_+;\kappa): \phi(t) \le  f(t)+\lambda\int_{a}^{b}K(t,s)\phi(s)ds ~\text{for all}~t\in I\right\}.
\end{eqnarray*}
\end{theorem}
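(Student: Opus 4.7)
The plan is to mirror the proof of Theorem \ref{Thm1} verbatim, using the \emph{same} operator
\[
T\phi(t)=f(t)+\lambda\int_{a}^{b}K(t,s)\phi(s)\,ds,
\]
now viewed as a self-map of $\mathcal{F}_{or}^{usc}(I,\mathbb{R}_+;\kappa)$, and then to invoke Tarski's theorem (Lemma \ref{L0}). As in the previous proof, the degenerate cases $\lambda=0$, $\rho=0$, or $\mu=0$ give trivial solutions ($\phi=f$ or $\phi\equiv 0$), so I would first reduce to $\lambda,\mu,\rho>0$ and hence $\kappa>0$.

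The first substantive step is to establish that $\mathcal{F}_{or}^{usc}(I,\mathbb{R}_+;\kappa)$ is itself a complete lattice in $\trianglelefteq$. This is not literally stated by Lemma \ref{L1} (which only swaps USC for LSC, not order-preservation for order-reversal), so I would supply it by the same argument: the pointwise infimum of a family of order-reversing maps is order-reversing (if $t_1\le t_2$ and each $\phi_\alpha(t_1)\ge \phi_\alpha(t_2)$, then $\inf_\alpha \phi_\alpha(t_1)\ge \inf_\alpha \phi_\alpha(t_2)$), and the pointwise infimum of USC maps is USC. Gr\"atzer's Lemma 14, as cited in the proof of Lemma \ref{L1}, then gives completeness.

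Next I would check that $T$ maps $\mathcal{F}_{or}^{usc}(I,\mathbb{R}_+;\kappa)$ into itself. Non-negativity, the bound $T\phi(t)\le \mu+\lambda\kappa\rho\le \kappa$ (using $\kappa(1-\lambda\rho)\ge \mu$), and the upper semi-continuity of $T\phi$ via uniform continuity of $K$ on $I^{2}$ transfer verbatim from the proof of Theorem \ref{Thm1}. The only ingredient that genuinely changes is monotonicity: for $t_1\le t_2$, since $K$ is order-reversing in $t$ and $\phi(s)\ge 0$, we have $K(t_1,s)\phi(s)\ge K(t_2,s)\phi(s)$, and combining with the order-reversing $f$ shows that $T\phi$ is order-reversing on $I$. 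That $T$ is order-preserving with respect to $\trianglelefteq$ is unchanged, since $K\ge 0$ forces $K(t,s)\phi(s)\le K(t,s)\psi(s)$ whenever $\phi\trianglelefteq\psi$.

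Finally I would apply Lemma \ref{L0} to $T$ acting on the complete lattice $\mathcal{F}_{or}^{usc}(I,\mathbb{R}_+;\kappa)$: the set of fixed points, which is precisely $\mathcal{S}_{or}^{usc}(I,\mathbb{R}_+;\kappa)$, is a non-empty complete sublattice with the claimed minimum and maximum expressed as $\inf\{\phi : T\phi\trianglelefteq \phi\}$ and $\sup\{\phi : \phi\trianglelefteq T\phi\}$. No step is technically difficult; the one piece of genuinely new bookkeeping is the completeness of $\mathcal{F}_{or}^{usc}(I,\mathbb{R}_+;\kappa)$, since the existing Lemma \ref{L1} addresses only the op/USC and op/LSC variants. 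Once that is in hand, replacing ``order-preserving'' by ``order-reversing'' throughout the proof of Theorem \ref{Thm1} is essentially automatic, with the positivity of $K$ doing all the work in passing both the order-reversal of $T\phi$ and the $\trianglelefteq$-monotonicity of $T$.
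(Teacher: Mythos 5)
Your proposal is correct and follows exactly the route the paper intends: the paper itself only remarks that the proof of Theorem~\ref{Thm2} is ``similar to that of the above theorem,'' and your adaptation supplies precisely the right modifications (the order-reversal of $T\phi$ via $K$ being order-reversing in $t$, and the $\trianglelefteq$-monotonicity of $T$ via $K\ge 0$). Your observation that Lemma~\ref{L1} as stated covers only the order-preserving variants, so that the completeness of $\mathcal{F}_{or}^{usc}(I,\mathbb{R}_+;\kappa)$ needs the analogous (pointwise-infimum) argument, is a correct and worthwhile piece of bookkeeping that the paper leaves implicit.
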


It is also worth noting that the above theorems hold when upper-semi-continuity is replaced by lower-semi-continuity. The major difference in their proofs is in the part of establishing the semicontinuity of $T\phi$, which we will discuss in more detail below.

Let the hypotheses of Theorem 1 be satisfied, where lower-semi-continuity is considered instead of upper-semi-continuity, and $\phi \in \mathcal{F}_{op}^{lsc}(I, \mathbb{R}_+;\kappa)$. Set $\alpha= \int_{a}^{b}\phi(s)ds$. Then, as $\phi$ is a non-negative map on $I$, clearly $\alpha\ge 0$. If $\alpha=0$, then $\phi=0$, and therefore $T\phi=f$ is LSC on $I$. So, let  $\alpha>0$.  Also, let $t_0 \in I$ and $\epsilon>0$ be arbitrary. 
Since $K$ is uniformly continuous on $I^2$, there exists $\delta_1>0$ such that $K(t_0,s)-\frac{\epsilon}{2\alpha\lambda}\le K(t,s)$ for all $s\in I$ and $t\in (t_0-\delta_1, t_0+\delta_1)\cap I$. Since $f$ is LSC at $t_0$, there exists $\delta_2>0$ such that $f(t_0)-\frac{\epsilon}{2}\le f(t)$ for all $t\in (t_0-\delta_2, t_0+\delta_2)\cap I$.
Then
\begin{eqnarray*}
	T\phi(t)&\ge & \left(f(t_0)-\frac{\epsilon}{2}\right)+\lambda \int_{a}^{b}\left(K(t_0,s)-\frac{\epsilon}{2\alpha\lambda}\right)\phi(s)ds\\
	&=&f(t_0)+\lambda \int_{a}^{b}K(t_0,s)\phi(s)ds-\epsilon\\
	&=&T\phi(t_0)-\epsilon
\end{eqnarray*}
for all $t\in (t_0-\delta, t_0+\delta)\cap I$, where $\delta:=\min\{\delta_1,\delta_2\}$, implying that $T\phi$ is LSC at $t_0$. Therefore $T\phi$ is LSC on $I$. The proof in the case of Theorem \ref{Thm2} with lower-semi-continuity is similar.

Let $\mathbb{R}_-:=(-\infty,0]$, the set of non-positive reals, which is also a simply ordered lattice in the usual order $\le$.
Although the  solutions of \eqref{Fredholm} in the above discussions are given for $\lambda\ge 0$, we may use them, together with an idea of conjugation through the reflection map, to obtain solutions of it when $\lambda<0$. 
More precisely, the following result shows how to deduce solutions of \eqref{Fredholm}  for $\lambda\le 0$ from those given above for $\lambda\ge 0$ and vice versa.
\begin{lm}
		Let $\lambda,\kappa, \mu, \rho \ge 0$, $f\in \mathcal{F}^{usc}_{op}(I,\mathbb{R}_+;\mu)$, and 
	$K$ be a continuous map of $I^2$ into $\mathbb{R}_+$
	order-preserving in both the variables $t$ and $s$ such that  $K(t,s)\le \rho$ for all $(t,s)\in I^2$. Then $\phi$ is a solution of \eqref{Fredholm} in $\mathcal{F}^{usc}_{op}(I,\mathbb{R}_+;\kappa)$
	if and only if $\tilde{\phi}$ is a solution of 
	\begin{eqnarray*}\label{Fredholm-tilde}
		\tilde{\phi}(t)=\tilde{f}(t)+\tilde{\lambda}\int_{a}^{b}\tilde{K}(t,s)\tilde{\phi}(s)ds, \quad t\in I,
	\end{eqnarray*}
	in $\tilde{\mathcal{F}}^{lsc}_{or}(I,\mathbb{R}_-;\tilde{\kappa}):=\{\psi \in \mathcal{F}^{lsc}_{or}(I,\mathbb{R}_-):\tilde{\kappa}\le \psi(t)~\text{for all}~t\in I\}$, where $\tilde{\lambda}=-\lambda$, $\tilde{\kappa}=-\kappa$, $\tilde{f}=-f\in \tilde{\mathcal{F}}^{lsc}_{or}(I,\mathbb{R}_-;\tilde{\mu})$ with  $\tilde{\mu}=-\mu$, and $\tilde{K}=-K$ is continuous on $I^2$ order-reversing in both of its variables $t$ and $s$ such that $\tilde{\rho}\le \tilde{K}(t,s)$ for all $(t,s)\in I^2$ with $\tilde{\rho}=-\rho$.  Furthermore, 
	this is also true when {\bf (i)} the roles of upper-semi-continuity and lower-semi-continuity are switched; {\bf (ii)}
	 the  roles of order-preserving and order-reversing are switched. 
	
%
\end{lm}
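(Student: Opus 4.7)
The plan is to exploit the fact that the reflection $R:\mathbb{R}\to\mathbb{R}$, $R(x)=-x$, is an order-reversing involution that swaps $\mathbb{R}_+$ with $\mathbb{R}_-$, USC with LSC, and order-preserving with order-reversing. The natural conjugation $\phi\mapsto\tilde\phi:=-\phi$ will then move the whole data $(\lambda,\kappa,\mu,\rho,f,K)$ of the equation to $(\tilde\lambda,\tilde\kappa,\tilde\mu,\tilde\rho,\tilde f,\tilde K)$ and carry solutions to solutions.

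First I would verify that the map $\phi\mapsto\tilde\phi$ is a bijection between $\mathcal{F}^{usc}_{op}(I,\mathbb{R}_+;\kappa)$ and $\tilde{\mathcal{F}}^{lsc}_{or}(I,\mathbb{R}_-;\tilde\kappa)$. This reduces to three elementary observations applied pointwise: $0\le\phi(t)\le\kappa$ iff $\tilde\kappa=-\kappa\le-\phi(t)=\tilde\phi(t)\le 0$; $\phi$ order-preserving on $I$ iff $\tilde\phi$ order-reversing on $I$; and $\phi$ USC at $t_0$ iff $\tilde\phi$ LSC at $t_0$ (from $\limsup_{t\to t_0}\phi(t)\le\phi(t_0) \Leftrightarrow \tilde\phi(t_0)\le\liminf_{t\to t_0}\tilde\phi(t)$). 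The same three observations applied to $f$ and (in the two-variable form) to $K$ show that $\tilde f\in\tilde{\mathcal{F}}^{lsc}_{or}(I,\mathbb{R}_-;\tilde\mu)$ and that $\tilde K$ is continuous on $I^2$, order-reversing in each variable, with $\tilde K(t,s)\ge\tilde\rho$.

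Next I would translate the equation itself. The decisive computation is the sign cancellation $\tilde\lambda\tilde K(t,s)=(-\lambda)(-K(t,s))=\lambda K(t,s)$, so for every $\phi\in\mathcal{F}^{usc}_{op}(I,\mathbb{R}_+;\kappa)$ and $t\in I$,
\begin{equation*}
\tilde f(t)+\tilde\lambda\int_a^b \tilde K(t,s)\tilde\phi(s)\,ds
=-f(t)+\int_a^b \lambda K(t,s)(-\phi(s))\,ds
=-\Bigl(f(t)+\lambda\int_a^b K(t,s)\phi(s)\,ds\Bigr).
\end{equation*}
Hence $\phi$ satisfies \eqref{Fredholm} iff the right-hand side above equals $-\phi(t)=\tilde\phi(t)$, i.e., iff $\tilde\phi$ satisfies the tilde equation. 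Combined with the bijection in the previous paragraph, this gives the desired equivalence.

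Finally, the two further assertions in items (i) and (ii) require only observing that the same reflection $R$ also intertwines the remaining pairs: LSC maps in $\mathcal{F}^{lsc}_{op}(I,\mathbb{R}_+;\kappa)$ with USC maps in $\tilde{\mathcal{F}}^{usc}_{or}(I,\mathbb{R}_-;\tilde\kappa)$ in case (i), and $\mathcal{F}^{usc}_{or}$ with $\tilde{\mathcal{F}}^{lsc}_{op}$ in case (ii); the algebraic step producing $\tilde\lambda\tilde K=\lambda K$ is identical in all cases. There is no real obstacle here: the only place demanding care is bookkeeping of the sign flips and making sure that negating both $\lambda$ and $K$ leaves the integrand invariant, which is what makes the whole conjugation work.
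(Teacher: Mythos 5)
Your proposal is correct and is exactly the argument the paper has in mind: the paper omits the proof as "simple," but the preceding paragraph explicitly names the idea as "conjugation through the reflection map," which is precisely your $\phi\mapsto-\phi$ bijection together with the sign cancellation $\tilde{\lambda}\tilde{K}=\lambda K$. All the pointwise verifications (range, monotonicity flip, USC/LSC flip) and the algebraic identity check out.
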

\begin{proof}
It is simple and thus omitted.
\end{proof}

\begin{exmp}
{\rm	Let
	\begin{eqnarray*}
	f(t)=\left\{\begin{array}{cll}
	\frac{t^4}{6}&\text{if}&0\le x<\frac{1}{2},\\
	\frac{t^3}{5}&\text{if}&\frac{1}{2}\le x\le 1,
	\end{array}\right.\quad K(t,s)=4t^7\sin^9\left(\frac{\pi}{2}s\right)~\text{for all}~(t,s)\in I^2,	\end{eqnarray*}
 and $\lambda=\frac{1}{5}$, where $I:=[0,1]$. Then   Theorem \ref{Thm1} is applicable with $\mu=\frac{1}{5}$, $\rho=4$ and $\kappa=2$. }
\end{exmp}

\noindent {\bf Acknowledgment:}
The author is supported by Indian Statistical Institute,
Bangalore in the form of a Visiting Scientist position through the J. C. Bose Fellowship
of Prof. B. V. Rajarama Bhat. 



%
%
%
%
%
%
%


\end{document}